\theoremstyle{plain}
\newtheorem{dfn}{Definition}
\newtheorem{thm}{Theorem}
\newtheorem{cor}{Corollary}
\newtheorem{lem}{Lemma}
\newcommand\N{\mathbb{N}}
\title{Extending A Theorem of Herstein}
\author{Cayley Pendergrass-Rice}
\date{} 
\thanks{This paper contains results from the author's dissertation, completed at the University of California, San Diego under the direction of Lance Small.  Thanks also to Efim Zelmanov for his insights and conversations.}
\begin{document}

\begin{abstract}
Just infinite algebras have been considered from various perspectives;  a common thread in these treatments is that the notion of just infinite is an extension of the notion of simple.  We reinforce this generalization by considering some well-known results of Herstein regarding simple rings and their Lie and Jordan structures and extend these results to their just infinite analogues.  In particular, we prove that if $A$ is a just infinite associative algebra, of characteristic not $2,3,$ or $5,$ then the Lie algebra $[A,A]/(Z\cap[A,A])$ is also just infinite (where $Z$ denotes the center of $A$).
\end{abstract}

\maketitle

\footnotetext{AMS classification 16, 17}
\footnotetext{Keywords: just infinite, projectively simple, Herstein, simple, Lie structure, Jordan structure}

\section{Intro and Notation}
Herstein studied simple noncommutative rings and their Lie and Jordan structures (\cite{Her1}, \cite{Her2}).  Among many theorems describing the structure forced by requiring no two sided ideals, he published a series of papers, later included in \cite{HerTopics}, studying the related structures of the Jordan and Lie rings of simple associative rings. The following theorem concisely summarizes some of these results.

\begin{thm}(Herstein) \label{thm:Herstein}
If $A$ is a simple ring of characteristic not equal to $2$ then 
\renewcommand{\theenumi}{\roman{enumi}}
\begin{enumerate}
\item $A^+$ is a simple Jordan ring. \label{thm:Hersteinpt1}
\item $[A,A]/(Z\cap [A,A])$ is a simple Lie ring, where $Z$ is the center of $A$. \label{thm:HerRR}
\end{enumerate}
\end{thm}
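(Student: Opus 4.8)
\noindent\emph{Proof strategy.}\ Both parts follow one template: in each I would (1) rephrase ``ideal of the relevant structure on $A$'' --- Jordan in part (i), Lie in part (ii) --- as an internal closure condition on a subset of $A$; (2) show that such a subset which is not contained in $Z$ absorbs a nonzero two--sided ideal of $A$, hence equals $A$ by simplicity; and (3) deal separately with the case in which the subset lies inside $Z$. A commutative simple ring is a field, for which both statements are immediate, so the substantive case is $A$ noncommutative, which I assume from now on; note that $Z$ is then a field in which $2$ is invertible.

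For part (i), I would work in $A^+$ with Jordan product $a\circ b=\tfrac12(ab+ba)$, legitimate since $2$ is invertible. Let $J\ne 0$ be a Jordan ideal. Elementary Jordan identities (the underlying operator identities have integer coefficients, so no division by $2$ is needed) give $j^{2}\in J$ and $aja\in J$ for all $j\in J$, $a\in A$; equivalently $aJa\subseteq J$ for every $a$, and linearizing $j\mapsto j+k$ yields $jAk+kAj\subseteq J$ for all $j,k\in J$. Herstein's lemma on Jordan ideals of a ring \cite{HerTopics} then gives the dichotomy: either $J\subseteq Z$, or $J$ contains a nonzero two--sided ideal of $A$, whence $J=A$ by simplicity. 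In the first case $J\circ A=JA\subseteq Z$, so for any nonzero $j\in J$ --- invertible in the field $Z$ --- we get $A=j^{-1}(jA)\subseteq Z$, contradicting noncommutativity; so this case does not occur. Hence $J=A$, and since $a\circ a=a^{2}$ is not identically zero, $A^+$ is a simple Jordan ring.

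For part (ii), I would set $R=[A,A]$, a Lie subring of $A$, with $Z\cap R$ a central Lie ideal and $\bar R=R/(Z\cap R)$. The first ingredient is Herstein's theorem on Lie ideals of a simple ring \cite{HerTopics}: a Lie ideal $U$ of $A$ with $U\not\subseteq Z$ satisfies $U\supseteq[A,A]$. This has two uses. First, $[R,R]=[[A,A],[A,A]]$ is itself a Lie ideal of $A$ --- by the Jacobi identity $[[R,R],A]\subseteq[[R,A],R]+[R,[R,A]]\subseteq[R,R]$, using that $[A,A]$ is a Lie ideal of $A$ so $[R,A]\subseteq R$ --- and it is not contained in $Z$ (again part of Herstein's Lie structure theory for noncommutative simple rings), so $[R,R]\supseteq R$; thus $R$ is a perfect Lie ring and $\bar R$ is nonzero and nonabelian. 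Second, and this is the crux, a Lie ideal of the \emph{Lie ring} $R$ need not be a Lie ideal of $A$, so I would invoke the ``Lie ideal of a Lie ideal'' refinement: if $U$ is a Lie ideal of $A$ with $U\not\subseteq Z$, then any Lie ideal $V$ of the Lie ring $U$ satisfies $V\subseteq Z$ or $V\supseteq[U,U]$. Taking $U=R$: a Lie ideal $V$ of $R$ with $V\not\subseteq Z$ contains $[R,R]=R$, so $V=R$; and $V\subseteq Z$ forces $V\subseteq Z\cap R$. Pulling back along the quotient map $R\to\bar R$, the only Lie ideals of $\bar R$ are $0$ and $\bar R$; together with nonabelianness this says $\bar R$ is a simple Lie ring.

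The genuine work in both halves is the same, and is where I expect essentially all of the effort to go: manufacturing an honest two--sided (resp.\ Lie) ideal of $A$ out of an ideal of the derived structure --- concretely, steering expressions such as $aja$, resp.\ single commutators $[v,a]$ with $a$ lying outside the ambient Lie ideal, back into the given ideal. That is precisely the content of Herstein's commutator and associator identities; granting those, the passages to simplicity of $A$ carried out above are purely formal.
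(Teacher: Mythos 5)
The paper does not prove this theorem: it is quoted as background, with the proofs residing in \cite{Her1}, \cite{Her2}, and \cite{HerTopics}, so there is no in-paper argument to compare against line by line. That said, your outline is a faithful reconstruction of how Herstein actually proves both parts, and the formal reductions you carry out (the $aja=2a\circ(a\circ j)-a^{2}\circ j$ absorption, the Jacobi-identity check that $[[A,A],[A,A]]$ is a Lie ideal of $A$, the pullback of ideals along $R\to R/(Z\cap R)$) are all correct. The one thing to be clear-eyed about is that the two ``dichotomy'' lemmas you invoke --- a nonzero Jordan ideal of a $2$-torsion-free semiprime ring contains a nonzero associative ideal, and a Lie ideal of $[A,A]$ either lies in $Z$ or contains $[A,A]$ --- are not auxiliary facts but are essentially the whole theorem; citing them makes your write-up an accurate proof outline rather than a proof. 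Two smaller notes: the Jordan lemma as the paper states it (its Theorem 2) has no central exceptional branch, so your $J\subseteq Z$ case, while handled correctly, is not needed; and it is worth recognizing that the machinery behind the Lie dichotomy you cite is exactly the subring $S(U)=\{a\in A\mid [a,A]\subseteq U\}$ and the chain $S^{[k]}$ that the paper develops in Section 2 --- the paper's Lemmas \ref{lem:lem1} and \ref{lem:idS4} are the just-infinite adaptation of the very ``Lie ideal of a Lie ideal'' refinement your argument rests on, so if you wanted to close the gap you would reprove those inclusions for simple $A$ rather than cite them.
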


Simple rings have been generalized in various directions.    One such generalization is to allow only a few, large ideals.  More formally, we insist that all non-zero two sided ideals have finite codimension.  

\begin{dfn}
An associative $k$-algebra $A$ is called \emph{just infinite dimensional}, or \emph{just infinite} for short, if $\dim_{k} A = \infty$ and $\dim_{k}A/I<\infty$ for all $(0)\neq I \triangleleft A$.
\end{dfn}

Just infinite algebras arise naturally.  For instance, $k[x]$, the algebra of polynomials in one variable over a field is just infinite, as are $k[x,x^{-1}]$, $k[[x]]$, and, vacuously, any simple algebra.  Beyond these, a class of examples not satisfying a polynomial identity are constructed in the language of algebraic geometry in \cite{rrz}.  The authors derive examples of $\N-$graded just infinite dimensional algebras, which they dub \"projectively simple\", from twisted homogenous coordinate rings.   Another interesting example is described in \cite{fs} and stems from the well-known Golod-Shafarevich example of an infinite dimensional algebra which is nil but not nilpotent.  Bartholdi generates another class of examples from groups acting on trees in \cite{bb}. 

Although the statement of Theorem \ref{thm:Herstein} as phrased above is nice for its brevity and parallel treatment of the Lie and Jordan cases, Theorem \ref{thm:Herstein}, part \ref{thm:Hersteinpt1} appears in an incarnation more convenient for our purposes in \cite{HerTopics}.

\begin{thm}(Herstein)
Let $A$ be a ring of characteristic not equal to 2.  Suppose that $A$ has no non-zero nilpotent ideals, i.e., $A$ is a semiprime ring.  Then any non-zero Jordan ideal of $A$ contains a non-zero associative ideal of $A$. 
\end{thm}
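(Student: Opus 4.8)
The plan is to start from a nonzero Jordan ideal $U$ of $A$ and extract a nonzero two-sided associative ideal $M$ with $M\subseteq U$, in three steps: first record the elementary arithmetic forced on $U$ by the Jordan-ideal condition; then use semiprimeness to rule out $U$ being ``square-zero''; and finally assemble $M$. Throughout I write $a\circ b=ab+ba$, so that $U$ being a Jordan ideal means exactly that $u\circ a\in U$ for all $u\in U$ and $a\in A$, and I use freely that $A$ has no additive $2$-torsion, its characteristic being different from $2$.

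\emph{Step 1: Jordan arithmetic.} A few manipulations of the defining relation give: $2u^{2}=u\circ u\in U$; $2aua\in U$ for every $a\in A$ (expand $a\circ(a\circ u)=a^{2}u+2aua+ua^{2}$ and subtract $a^{2}\circ u$, which already lies in $U$); by polarization $2(aub+bua)\in U$ for $a,b\in A$, and by an analogous computation $2(uau'+u'au)\in U$ for $u,u'\in U$; and, from the identity $a\circ(b\circ u)-b\circ(a\circ u)=[[a,b],u]$ together with $u\circ[a,b]\in U$, the Lie-type relations $[[A,A],U]\subseteq U$, $\,2[A,A]U\subseteq U$, and $2U[A,A]\subseteq U$. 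The upshot is that $U$ is closed under the quadratic operations $x\mapsto axa$ and under two-sided multiplication by $2[A,A]$, but not, \emph{a priori}, under the full bilinear operation $x\mapsto axb$.

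\emph{Step 2: $U$ is not square-zero.} Suppose $u^{2}=0$ for every $u\in U$. Polarizing gives $uv+vu=0$ for all $u,v\in U$, that is, $U\circ U=0$. Then for $u\in U$ and $a\in A$ the element $u\circ(u\circ a)$ vanishes, being the $\circ$-product of the two elements $u$ and $u\circ a$ of $U$; but $u\circ(u\circ a)=u^{2}a+2uau+au^{2}=2uau$, so $2uau=0$, hence $uAu=0$ for every $u\in U$ by $2$-torsion-freeness. In a semiprime ring the relation $uAu=0$ forces $u=0$ (such a $u$ generates a nilpotent two-sided ideal), so $U=0$, contrary to hypothesis. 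We may therefore fix $u_{0}\in U$ with $u_{0}^{2}\neq 0$ and put $b_{0}=2u_{0}^{2}$, a nonzero element of $U$.

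\emph{Step 3: assembling $M$.} This is the heart of the matter, and the step where I expect the real difficulty: one must coax an honest two-sided ideal out of the merely quadratic closure obtained in Step 1. Since the Jordan-ideal condition gives closure under $x\mapsto axa$ but not under $x\mapsto axb$, even the ideal generated by $\{2u^{2}:u\in U\}$ need not be visibly contained in $U$. When $A$ is commutative, $[A,A]=0$ and the relations $2AU\subseteq U$, $2u^{2}\in U$ show at once that $2AU$ is a nonzero two-sided ideal inside $U$; so assume $A$ noncommutative. The idea is then to build $M$ from iterated products of $b_{0}$ (more generally, of elements of $U$) with commutators, repeatedly invoking $2[A,A]U\subseteq U$, $2U[A,A]\subseteq U$ and the polarized forms of $2aua\in U$ to rewrite every stray left- or right-multiplication by an element of $A$ as a $\mathbb{Z}$-combination of admissible quadratic expressions; the factors of $2$ that proliferate are harmless since $A$ is $2$-torsion free, and this is precisely where the hypothesis $\operatorname{char}A\neq 2$ does essential work. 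The bookkeeping here is delicate, and pinning down the right generating set for $M$ is the crux. Once such a nonzero $M\subseteq U$ is produced --- nonvanishing again coming from semiprimeness, as in Step 2, via the seed $b_{0}\neq 0$ --- the proof is complete.
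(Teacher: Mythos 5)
Your proposal has a genuine gap, and you have in fact named it yourself: Step 3 is not a proof but an announcement that a proof is needed. (For calibration: the paper does not prove this statement at all --- it quotes it from Herstein's \emph{Topics in Ring Theory} --- so the only meaningful comparison is with Herstein's original argument.) Your Step 1 is correct and reproduces Herstein's preliminary lemmas: $2u^2\in U$, $2aua\in U$, its polarization $2(aub+bua)\in U$, and $2[a,b]u,\,2u[a,b]\in U$ all follow exactly as you say, and Step 2 (a nonzero Jordan ideal of a $2$-torsion-free semiprime ring contains an element of nonzero square) is a correct and genuinely useful reduction --- the argument via $u\circ(u\circ a)=2uau$ and $uAu=0\Rightarrow u=0$ is sound. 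But everything up to that point only establishes closure of $U$ under \emph{symmetric} operations $x\mapsto axa$ and under one-sided multiplication by commutators; as you observe, this does not give closure under $x\mapsto axb$, and the entire content of the theorem is the passage from quadratic closure to a genuine two-sided ideal. Saying that one should ``rewrite every stray left- or right-multiplication as a $\mathbb{Z}$-combination of admissible quadratic expressions'' is a statement of the difficulty, not a resolution of it: you never exhibit a candidate ideal $M$, never write down the identity that forces $M\subseteq U$, and never verify $M\neq 0$.

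Concretely, what is missing is Herstein's key computational lemma: starting from an element $a\in U$ with $a^2\neq 0$ (your $u_0$), one must show that a suitable $2$-power multiple of the two-sided ideal of $A$ generated by $a^2$ (equivalently, expressions of the form $xa^2y$ for $x,y\in A$) lies in $U$. This is done by combining the symmetric relation $2(xwy+ywx)\in U$ (applied with $w=2a^2\in U$) with the antisymmetric relations $2[x,y]w\in U$ and $2w[x,y]\in U$ so as to isolate the one-sided products $xwy$; it is a specific identity, not generic bookkeeping, and without it the theorem is unproved. Until that identity is supplied and the nonvanishing of the resulting ideal is checked against the seed $u_0^2\neq 0$, the proposal establishes only the (known, easier) preliminary facts and not the theorem itself.
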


From this, it is clear that if $A$ is a simple algebra, so too is $A^+$.  Taken in conjunction with Theorem 1 of \cite{fp}, which proves all just infinite algebras are prime, we see the immediate implication for just infinite rings.

\begin{cor}
Let $A$ be a just infinite algebra over a field $k$ with $char(A) \neq 2$.  Then $A^+$ is also a just infinite algebra over $k$.
\end{cor}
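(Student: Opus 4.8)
The plan is to reduce the statement immediately to Herstein's theorem on Jordan ideals of semiprime rings, using the primeness of just infinite algebras. First I would record that $A$ is semiprime: by Theorem 1 of \cite{fp} a just infinite algebra is prime, and a prime ring has no non-zero nilpotent ideals (if $I$ is a two-sided ideal with $I^{n}=0$ and $n$ is chosen minimal, then $I^{n-1}\cdot I=(0)$ forces $I^{n-1}=(0)$, contradicting minimality). Hence the hypotheses of Herstein's theorem are satisfied by $A$, and also $char(A)\neq 2$ is given.

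Now let $J$ be a non-zero Jordan ideal of $A^{+}$. Viewing $A$ as a ring, Herstein's theorem produces a non-zero associative two-sided ideal $I$ of $A$ with $I\subseteq J$. To move back into the category of $k$-algebras, I would replace $I$ by its $k$-linear span $kI$: this is again a non-zero two-sided ideal of the $k$-algebra $A$ (one checks $A(kI)=k(AI)\subseteq kI$ and likewise on the right), it still lies in $J$ since $J$ is a $k$-subspace, and it is non-zero since $I\subseteq kI$. So without loss of generality $I$ is a non-zero $k$-algebra ideal of $A$ contained in $J$.

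Finally I would invoke the just infinite hypothesis on $A$: since $I\neq(0)$ we have $\dim_{k}A/I<\infty$. Because $I\subseteq J$, the space $A^{+}/J$ is a linear image of $A^{+}/I=A/I$, so $\dim_{k}A^{+}/J\le\dim_{k}A/I<\infty$. Since $A^{+}$ has the same underlying $k$-vector space as $A$, $\dim_{k}A^{+}=\dim_{k}A=\infty$. Thus every non-zero Jordan ideal of $A^{+}$ has finite codimension and $A^{+}$ is infinite dimensional, which is exactly the assertion that $A^{+}$ is just infinite.

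I do not expect a genuine obstacle: the mathematical content is entirely carried by Herstein's theorem and by the primeness of just infinite algebras, both of which are available. The only points that need a moment's care are bookkeeping rather than substance: the transition between Herstein's ring-theoretic ideal and a $k$-algebra ideal (handled by passing to the $k$-span), and the remark that ``just infinite'' for the Jordan algebra $A^{+}$ should be read as the natural analogue of the associative definition, namely infinite dimensionality together with finite codimension of every non-zero Jordan ideal.
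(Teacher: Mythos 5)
Your argument is correct and is exactly the route the paper takes: it cites Theorem 1 of \cite{fp} for primeness (hence semiprimeness) of $A$, applies Herstein's theorem to place a non-zero associative ideal inside any non-zero Jordan ideal, and concludes finite codimension from the just infinite hypothesis. The paper leaves these steps as an ``immediate implication,'' so your write-up simply fills in the same bookkeeping the author omits.
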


This result and the parallelism in Theorem \ref{thm:Herstein} suggest we might extend Theorem \ref{thm:Herstein} part \ref{thm:HerRR} into the context of just infinite algebras also.  In fact, Herstein's result for Lie algebras does extend, though not as readily.  We should note that the broad strokes, if not the details, of this result follow Herstein's treatment of the simple case. 

In pursuing an analogue of Theorem \ref{thm:Herstein} for just infinite dimensional algebras, we first outline our notation.  The next section continues, then, with a sequence of inclusions necessary for the result and finally we prove the principal theorem:

\begin{thm} \label{thm:main}
If $A$ is a just infinite dimensional algebra without $2$, $3$, or $5-$torsion, then $[A,A]/(Z \cap[A,A])$ is also just infinite (as a Lie algebra).
\end{thm}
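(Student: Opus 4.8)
Write $L:=[A,A]$ and $\bar L:=L/(Z\cap L)$. Recall from \cite{fp} that $A$ is prime, and note that we may assume $A$ noncommutative, since otherwise $L=0$ and there is nothing to prove. Two things must be shown: that $\dim_k\bar L=\infty$, and that every nonzero Lie ideal of $\bar L$ has finite codimension in $\bar L$. For the first I would argue by contradiction. If $\dim_k\bar L<\infty$ then, lifting a basis, $[A,A]\subseteq Z+F$ with $\dim_kF<\infty$. The key point is that $L$ is a faithful $Z$-module: indeed $z[x,y]=[zx,y]\in L$ for central $z$, and if $z[A,A]=0$ then $(z)\cdot([x,y])=A\,z[x,y]\,A=0$ for all $x,y$, so primeness and $L\neq0$ give $z=0$. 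A short argument with centralizers then shows that $L\subseteq Z+F$ forces $\dim_kZ<\infty$, hence $\dim_kL<\infty$; since a prime ring whose commutator space is finite dimensional over its center is finite dimensional over that center, this makes $\dim_kA<\infty$, contradicting just-infiniteness.

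For the main assertion, let $\bar W\neq0$ be a Lie ideal of $\bar L$ and let $W\subseteq L$ be its full preimage, so that $Z\cap L\subseteq W$, $W\not\subseteq Z$, and $[W,L]\subseteq W$. The plan is to exhibit a nonzero two-sided ideal $I\ideal A$ with $[I,A]\subseteq W$. Granted this, just-infiniteness gives $c:=\dim_kA/I<\infty$, and choosing a $k$-linear complement $F$ of $I$ in $A$ one has $[F,I]\subseteq[I,A]$ (because $[f,i]=-[i,f]$ with $i\in I$), so that
\[
[A,A]=[I+F,\,I+F]\subseteq[I,A]+[F,F].
\]
Hence $[I,A]$, and a fortiori $W$, has codimension at most $(\dim_kF)^2=c^2$ in $L$, so $\bar W$ has finite codimension in $\bar L$; together with the first point this proves the theorem.

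It remains to produce such an $I$, and this is the technical core; I would follow the outline of Herstein's analysis of Lie ideals in simple rings (\cite{HerTopics}), now carried out over a prime base. The starting observation is that a Lie ideal $W$ of the Lie algebra $L$ already absorbs all double commutators of $A$: for $w\in W$ and $a\in A$ one has $[a,w]\in L$, hence $[w,[a,w]]\in[W,L]\subseteq W$, i.e.\ $[w,[w,a]]\in W$ for every $a\in A$; and when $w\notin Z$ this subspace $\{[w,[w,a]]:a\in A\}$ is nonzero, since in a prime ring of characteristic $\neq2$ an inner derivation whose square vanishes is induced by a central element. Feeding these inclusions, their linearizations $[w,[w',a]]+[w',[w,a]]\in W$, and their iterates into Herstein's commutator calculus --- the sequence of inclusions to be developed in the next section --- one progressively enlarges the known part of $W$ until it contains $[I,A]$ for some nonzero $I\ideal A$. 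It is precisely here that the hypothesis $\mathrm{char}\,A\neq2,3,5$ is needed: as in Herstein's original arguments, the identities linking commutators and their powers to two-sided ideals degenerate on a handful of small algebras over the prime fields of characteristics $2$, $3$ and $5$, and these exceptional cases must be excluded.

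The step I expect to be the main obstacle is exactly this bridge --- from ``$W$ is an ideal of the Lie ring $[A,A]$'' to ``$W$ engulfs $[I,A]$ for an associative ideal $I$ of $A$.'' Herstein's theorems are phrased for Lie ideals of the full associative ring and over a simple base, so his arguments must be rerun inside the Lie subring $[A,A]$ over a merely prime ring, with care at each point about which low-characteristic degeneracy is being avoided. Once that bridge is established, the codimension estimate above completes the proof without further difficulty.
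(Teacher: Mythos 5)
Your skeleton agrees with the paper's: for a Lie ideal $W$ of $L=[A,A]$ not contained in $Z$, produce a nonzero associative ideal inside the set of elements whose commutators land in $W$, invoke just-infiniteness to get a finite-codimensional complement $F$, and conclude via $[A,A]=[I+F,I+F]\subseteq[I,A]+[F,F]$ --- that last codimension estimate is essentially the one in the paper (with $S(U)$ in place of your $I$). But the proposal leaves the actual content of the theorem unproved. The entire bridge from ``$W$ is a Lie ideal of $[A,A]$ with $W\not\subseteq Z$'' to ``$W$ contains $[I,A]$ for some $0\neq I\ideal A$'' is deferred to ``Herstein's commutator calculus,'' and that is precisely where all of the work, and all of the characteristic hypotheses, live. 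The paper carries this out by introducing $S(U)=\{a\in A\mid [a,A]\subseteq U\}$, proving it is a subalgebra of $A$ containing $[U,U]$, establishing $id_A(S^{[4]})\subseteq S(U)$ for the derived series $S^{[k]}$, and then --- crucially --- disposing of the degenerate branch $S^{[4]}=(0)$ by the separate and substantial Lemma \ref{lem:UinZ}: in a semiprime ring without $2$-, $3$-, or $5$-torsion, an abelian subgroup $U$ with $[U,[A,A]]\subseteq U$ and $[U,U]\subseteq Z$ lies in $Z$. That lemma is a page of nilpotence and semiprimeness arguments (showing $([a,x,x,x]A)^2=(0)$, that $U$ has no nonzero nilpotents, that $[x,a]A[x,a]=(0)$, and so on), and it is exactly where the $3$- and $5$-torsion exclusions are used. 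Your sketch neither supplies this nor identifies it as an obstacle distinct from the commutator calculus; without it you cannot conclude that the calculus ever yields a nonzero ideal rather than collapsing into the center, so the proof as written has a genuine gap at its core.

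Two secondary points. You are right that one ought to check $\dim_k\bar L=\infty$ --- the paper is in fact silent on this --- but your argument for it is itself incomplete: the step ``$L\subseteq Z+F$ forces $\dim_k Z<\infty$'' is asserted via an unspecified ``short argument with centralizers,'' and the claim that a prime ring whose commutator space is finite dimensional over its center must be finite dimensional over that center needs a proof or a citation. Also, your opening observation that $[w,[w,a]]\in W$ (and the remark about square-zero inner derivations in prime rings) is correct but does not by itself drive the induction; the paper's mechanism is the subalgebra $S(U)$ and its derived series, not iterated double commutators of single elements, and the dichotomy $S^{[4]}\neq(0)$ versus $S^{[4]}=(0)$ is the organizing principle you would need to reconstruct.
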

 
This paper contains results from the author's dissertation, completed at the University of California, San Diego under the direction of Lance Small.  Thanks also to Efim Zelmanov for his insights and conversations.

\section{Notation and Preliminary Inclusions}

We denote by $A^+$ the Jordan ring of an associative ring $A$ with multiplication written as $\ast$.  The Lie ring of $A$ will be written $A^-$ with product the commutator $ [ \cdot , \cdot ]$.  A Jordan ideal  is an ideal of the Jordan ring of $A$; in other words, an additive subgroup of $A$ closed under the operation $\ast$.  Similarly, we can define Lie ideals of $A$ as ideals of $A^-$.  By $[A,A]$ we mean the subalgebra of $A^-$ generated by all elements of the form $[x,y]$ for some $x,y \in A$.

For aesthetic reasons, in the case of repeated commutators we will omit the internal braces; thus $[[x,y],z]$ will instead be written $[x,y,z]$ and, more generally, $[x_1,x_2,...,x_n]$ will denote the commutator $\underbrace{[...[}_{n-1}x_1,x_2],x_3],...],x_n]$.  

For the remainder, $A$ is a $k-$algebra of characteristic not equal to 2 and $U$ is a Lie ideal of $[A,A]$.  Define the set $S(U) :=\{a \in A | [a,A] \subseteq U\}$.  It's clear from the definition that $S(U)$ is a Lie ideal of $A$.  We next list some additional properties of $S(U)$, which require the following technical lemma.  

\begin{lem} \label{lem:tech}
If $a,b \in A$, where $A$ is an associative algebra with Lie and Jordan products as defined above, then
\renewcommand{\theenumi}{\roman{enumi}}
\begin{enumerate}
\item $ab=1/2[a,b]+1/2a \ast b$. \label{lem:tech1}
\item $[a, b \ast c]=[a,b] \ast c + [a,c] \ast b$ \label{lem:tech2}
\end{enumerate}
\end{lem}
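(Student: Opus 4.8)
The plan is to verify both identities by direct computation from the definitions of the two products, namely the commutator $[a,b] = ab - ba$ and the Jordan product $a \ast b = ab + ba$. Both statements are purely formal consequences of associativity: each side of each equation expands into a noncommutative polynomial in the given variables, and the content of the lemma is simply that, after collecting terms, the two sides coincide. The only hypothesis invoked is $\mathrm{char}\,A \neq 2$, which legitimizes the division by $2$ appearing in part (\ref{lem:tech1}).

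For part (\ref{lem:tech1}), I would add the two defining expressions. Since $[a,b] = ab - ba$ and $a \ast b = ab + ba$, their sum is $(ab - ba) + (ab + ba) = 2ab$, the $\pm ba$ terms cancelling; dividing by $2$ then gives $ab = \tfrac{1}{2}[a,b] + \tfrac{1}{2}\,a \ast b$. This is immediate.

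For part (\ref{lem:tech2}), I would expand each side into four associative monomials and match them. On the left,
\[
[a, b \ast c] = a(bc + cb) - (bc + cb)a = abc + acb - bca - cba.
\]
On the right, expanding the two Jordan products of brackets gives
\[
[a,b] \ast c = abc - bac + cab - cba, \qquad [a,c] \ast b = acb - cab + bac - bca,
\]
whose sum is $abc + acb - bca - cba$ once the $\pm bac$ and $\pm cab$ terms cancel. This agrees with the left-hand side, establishing the identity.

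The computation presents no genuine obstacle; the only care required is in tracking the noncommutative monomials and confirming the two intended cancellations in part (\ref{lem:tech2}). Once established, these identities serve as the basic algebraic tools for relating the associative, Lie, and Jordan structures of $A$, and in particular for deriving the properties of the set $S(U)$ that follow.
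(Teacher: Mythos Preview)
Your proof is correct and proceeds exactly as one would expect: a direct expansion of both sides using the definitions $[a,b]=ab-ba$ and $a\ast b=ab+ba$, with the cancellations tracked carefully. The paper does not write out these computations at all; it simply remarks that ``the details require some finicky manipulations, but little else,'' so your argument is a faithful fleshing-out of what the author leaves implicit.
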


The details require some finicky manipulations, but little else.  With this, we can prove a few useful properties of $S(U)$.

\begin{lem} \label{lem:lem1} Let $U$ be a Lie ideal of $[A,A]$.  Then 
\renewcommand{\theenumi}{\roman{enumi}}
\begin{enumerate}
\item $S(U)$ is a subalgebra of A.
\item $[U,U] \in S(U)$. \label{lem:lem1b}
\item $[U,S(U)] \subset S(U)$.

\end{enumerate}
\end{lem}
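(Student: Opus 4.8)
The engine for all three parts is the same: to place an element $t$ in $S(U)$ one must verify $[t,r]\in U$ for every $r\in A$, and the only structural facts in play are that $[a,A]\subseteq U$ for $a\in S(U)$, and that $[U,[A,A]]\subseteq U$ since $U$ is a Lie ideal of $[A,A]$. The Jacobi identity, in the derivation form $[x,[y,z]]=[[x,y],z]+[y,[x,z]]$, is exactly what lets one rewrite a double bracket $[[\,\cdot\,,\,\cdot\,],r]$ as single brackets these two facts can absorb. So I would first record that $S(U)$ is closed under the Lie bracket: for $a,b\in S(U)$ and $s\in A$ we have $[[a,b],s]=[a,[b,s]]-[b,[a,s]]$, and both summands lie in $U$ because $[b,s],[a,s]\in A$ and $a,b\in S(U)$; hence $[a,b]\in S(U)$. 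Together with additivity, $S(U)$ is thus a Lie subalgebra, which is all I will use.

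For part (ii), take $u,v\in U$ and $r\in A$; the same expansion gives $[[u,v],r]=[u,[v,r]]-[v,[u,r]]$, and now $[v,r],[u,r]\in[A,A]$ since they are commutators, so both summands lie in $U$ because $U$ is a Lie ideal of $[A,A]$. Hence each generator $[u,v]$ lies in $S(U)$, and as $S(U)$ is an additive subgroup, $[U,U]\subseteq S(U)$. Part (iii) has the same flavour, and is in any case immediate from $S(U)$ being a Lie ideal of $A$ since $U\subseteq A$: for $u\in U$, $a\in S(U)$, $r\in A$, $[[u,a],r]=[u,[a,r]]-[a,[u,r]]$, the first summand lying in $U$ because $[a,r]\in[A,A]$ and $U$ is a Lie ideal of $[A,A]$, the second because $a\in S(U)$ and $[u,r]\in A$.

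Part (i) is where the actual work is. Since $S(U)$ is already an additive subgroup, only closure under the associative product is at issue: given $a,b\in S(U)$ and $r\in A$, I must show $[ab,r]\in U$. By Lemma \ref{lem:tech}(\ref{lem:tech1}), $2ab=[a,b]+a\ast b$, so it suffices to prove $[[a,b],r]\in U$ and $[a\ast b,r]\in U$ and then divide by $2$, which is legitimate since $\operatorname{char}A\neq 2$. The first follows from $[a,b]\in S(U)$, shown above. For the second, expand $[a\ast b,r]=[ab,r]+[ba,r]$ by the Leibniz rule into $\big(a[b,r]+b[a,r]\big)+\big([a,r]b+[b,r]a\big)$; a second application of Leibniz rewrites $a[b,r]+b[a,r]$ as $[b,ar]+[a,br]$ and $[a,r]b+[b,r]a$ as $[a,rb]+[b,ra]$, the leftover terms of shape $[a,b]r$ cancelling in pairs because $[a,b]+[b,a]=0$; and each of these four commutators, being of the form $[c,y]$ with $c\in\{a,b\}\subseteq S(U)$ and $y\in A$, lies in $U$. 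Hence $[a\ast b,r]\in U$.

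The one genuine obstacle is the step just used in (i): the naive expansion $[ab,r]=a[b,r]+[a,r]b$ does not visibly land in $U$, because even though $[a,r],[b,r]\in U$ the set $U$ need not be closed under left or right multiplication by elements of $A$. The remedy is to symmetrize: replace $ab$ by $a\ast b=ab+ba$, so that the obstructive cross terms $[a,b]r$ and $[b,a]r$ annihilate each other and what remains is an honest sum of commutators from $[S(U),A]\subseteq U$; the element $[ab,r]$ itself is then recovered from $2[ab,r]=\big([ab,r]+[ba,r]\big)+[[a,b],r]$ together with the absence of $2$-torsion. The remaining content is routine verification of the Leibniz and Jacobi identities invoked, which I would carry out but not belabour.
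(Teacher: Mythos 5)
Your proof is correct and follows essentially the same route as the paper: the decomposition $2ab=[a,b]+a\ast b$ together with the reduction of both pieces to commutators of the form $[c,y]$ with $c\in S(U)$, $y\in A$ (your four-term Leibniz expansion of $[a\ast b,r]$ is just Lemma~\ref{lem:tech}(\ref{lem:tech2}) written out), and the same Jacobi-identity argument for parts (ii) and (iii). If anything, your direct verification of (iii) is more complete than the paper's one-line version.
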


\begin{proof} \ 
\renewcommand{\theenumi}{\roman{enumi}}
\begin{enumerate}
\item
To prove $S$ is closed under multiplication consider $[rs, A]=rsA-Ars$.  Using the technical lemma above, Lemma \ref{lem:tech} part \ref{lem:tech1}, 
\begin{eqnarray}
rsA-Ars=\{1/2[r,s]+1/2(r\ast s)\}A -A\{1/2[r,s]+1/2(r\ast s)\}. 
\end{eqnarray}

Noting that $[r,s]A=r[s,A]+[rA,s]$, we can reduce this to 
\begin{equation} 
rsA-Ars=1/2\{ [r,[s,A]]+[rA,s]-[As,r]\} + 1/2[(r\ast s),A]. \ \ \ \label{lem:eqn1}
\end{equation}
On the right hand side, the first group of terms is in $U$ because $r,s \in S$.  Turning to the last term of equation \ref{lem:eqn1}, we note that 
\begin{eqnarray}
[(r\ast s),A]= [r, s \ast A]+[s, r \ast A]. 
\end{eqnarray} 
Because $r \in S$, $[r,s \ast A] \subseteq U$.  Similarly we find that the second term, and hence $[(r\ast s),A]$ itself, is contained in $U$.  Thus, returning to equation \ref{lem:eqn1}, we see that $[rs,A] \subseteq U$ and so $rs \in S$.

A straightforward computation shows additive closure; the remaining conditions to prove $S$ is a subalgebra of $A$ also are shown easily. 
\item
Let $x,y \in U$ and $a \in A$.  Then, by the Jacobi identity, $[[x,y],a]=[x,[y,a]]+[y,[a,x]]$.  As $x \in U, [y,a] \in [A,A]$, and $U$ is a Lie ideal of $[A,A]$, $[x,[y,a]] \in U$.  Similarly, $[y,[a,x]] \in U$ and so $[[U,U], A] \subseteq U$ and hence we have $[U,U] \subseteq S$ as desired.

\item $[U, S(U)] \subset U$ by the definition of $S(U)$.  Thus $[U,S(U),U] \subset [U,U] \subset U$, proving the statement. 
 
\end{enumerate}
\end{proof}


Given a Lie ideal, $U \triangleleft A^-$, denote by $U^{[i]}$ a member of the descending chain $U^{[1]} \supseteq U^{[2]} \supseteq U^{[n]} \supseteq \dots$ defined recursively by $U^{[1]}=U, U^{[n]}=[U^{[n-1]},U^{[n-1]}]$.  Although similar, note that this is not the same as the lower central series whose elements are denoted $U^{(n)}$.  
Consider, then, $[S^{[k]},[A,A]]$; from Lemma \ref{lem:lem1} it follows that this subring is contained back in $S^{[k]}$, i.e. $[S^{[k]},[A,A]] \subseteq S^{[k]}$.  Subsequently,  we can show that \begin{equation} [S^{[k+1]},A] \subseteq [[S^{[k]},S^{[k]}],A] \subseteq [S^{[k]},[A,A]] \subseteq S^{[k]}. \end{equation} \label{inc1}
Then by Lemma \ref{lem:tech}, part \ref{lem:tech1}, 
\begin{equation} 
S^{[3]}  \subseteq [S^{[3]}, A] + S^{[3]} \ast A \subseteq S^{[2]} + S^{[3]} \ast A,
\end{equation} 

and, applying Lemma \ref{lem:tech}, part \ref{lem:tech2},  we have
\begin{equation}
S^{[3]}A= [S^{[2]},S^{[2]}] \ast A \subseteq [S^{[2]},S^{[2]} \ast A] + [S^{[2]}, A] \ast S^{[2]}.
\end{equation}
By equation \ref{inc1}, above, and  Lemma \ref{lem:lem1} the right hand side of this inclusion, and thus also the left, is contained in $S$.  Thus we have demonstrated that $S^{[3]}A \subseteq S$.  Using an identical argument on the other side, we can conclude that $AS^{[3]} \subseteq S$, too.  These together prove that the ideal generated by $S^{[4]}$ in $A$, $id_A(S^{[4]})=A[S^{[3]},S^{[3]}]A,$ is also contained in $S$.

\begin{lem} \label{lem:idS4}
$id_A(S^{[4]}) \subseteq S$.
\end{lem}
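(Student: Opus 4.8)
The lemma crystallizes the chain of inclusions assembled just above, so the plan is to organize those computations into a two-step proof. First I would recall, as noted above, that Lemma~\ref{lem:lem1} yields $[S^{[k+1]},A]\subseteq S^{[k]}$ for every $k$, so that in particular $[S^{[3]},A]\subseteq S^{[2]}\subseteq S^{[1]}=S$ and $[S^{[2]},A]\subseteq S$. Feeding this into Lemma~\ref{lem:tech}, part~\ref{lem:tech1}, I would split an arbitrary product $sa$ with $s\in S^{[3]}$, $a\in A$ as $\tfrac12[s,a]+\tfrac12\,s\ast a$: the commutator term already lies in $[S^{[3]},A]\subseteq S$, and the Jordan term $S^{[3]}\ast A=[S^{[2]},S^{[2]}]\ast A$ I would rewrite, via Lemma~\ref{lem:tech}, part~\ref{lem:tech2}, inside $[S^{[2]},S^{[2]}\ast A]+[S^{[2]},A]\ast S^{[2]}$, and then check, using $[S^{[2]},A]\subseteq S$, $S^{[2]}\subseteq S$, and the fact that $S$ is a subalgebra (hence closed under both the associative and the Jordan product, by Lemma~\ref{lem:lem1}), that both summands land in $S$. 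This gives $S^{[3]}A\subseteq S$, and the mirror-image computation on the other side gives $AS^{[3]}\subseteq S$.

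The second and decisive step is to promote these one-sided absorption facts to the statement about the two-sided ideal $id_A(S^{[4]})=A[S^{[3]},S^{[3]}]A$. This does not follow formally: $S$ is only a subalgebra, not an ideal, so $AS^{[3]}A$ need not sit in $S$ merely because $S^{[3]}A$ and $AS^{[3]}$ do. The observation I would exploit is that $S^{[4]}=[S^{[3]},S^{[3]}]$ is spanned by commutators $[p,q]$ with \emph{both} $p,q\in S^{[3]}$, so a generator of the ideal factors as
\[
a[p,q]b=(ap)(qb)-(aq)(pb)
\]
for $a,b\in A$ and $p,q\in S^{[3]}$. Now $ap,aq\in AS^{[3]}\subseteq S$ while $pb,qb\in S^{[3]}A\subseteq S$, so each term on the right lies in $S\cdot S\subseteq S$; hence $a[p,q]b\in S$, and therefore $id_A(S^{[4]})\subseteq S$.

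I expect the main obstacle to be the first step, namely the Jordan-product bookkeeping showing $S^{[3]}\ast A\subseteq S$: iterating Lemma~\ref{lem:tech}, part~\ref{lem:tech2}, keeps reproducing terms of the shape $[S^{[2]},S^{[2]}]\ast A$, and one must be careful to peel off the genuinely-in-$S$ pieces (those involving $[S^{[2]},A]\subseteq S$ and products of elements of the subalgebra $S$) rather than going in circles. The second step, by contrast, is short once the one-sided inclusions are in hand, but it is the conceptual crux: it is precisely the commutator structure of $S^{[4]}$—splitting an outer multiplication by $A$ onto a separate $S^{[3]}$-factor on each side—that allows a two-sided ideal to be absorbed into the merely one-sided-closed set $S$.
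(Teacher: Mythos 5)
Your proposal is correct and follows essentially the same route as the paper: the identical chain of inclusions via Lemma~\ref{lem:lem1} and both parts of Lemma~\ref{lem:tech} gives $S^{[3]}A \subseteq S$ and $AS^{[3]} \subseteq S$. The paper then merely asserts that these two one-sided facts yield $A[S^{[3]},S^{[3]}]A \subseteq S$; your identity $a[p,q]b=(ap)(qb)-(aq)(pb)$ with $p,q\in S^{[3]}$ supplies exactly the step the paper leaves implicit, and it is the right one.
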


\section{And Now For Something Completely Different}
With these preliminary computations complete we are ready to attend to Theorem \ref{thm:main}.  Although less elegant than the theorem itself, the following lemma is at the heart of that result.  

\begin{lem} \label{lem:UinZ}
Let $A$ be a semiprime ring without $2,3, or 5$ torsion.  Let $U$ be an abelian subgroup of $A$ with $[U,[A,A]] \subseteq U$ and also $[U,U] \subseteq Z$, where $Z$ is the center of $A$.  Then $U \subseteq Z$.
\end{lem}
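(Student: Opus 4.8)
The plan is, for an arbitrary fixed $u\in U$, to show that the inner derivation $d$ of $A$ given by $d(x)=[u,x]$ vanishes identically, which says $u\in Z$; since $u$ is arbitrary this yields $U\subseteq Z$. First I would record that $d$ is nilpotent of bounded index: chasing the hypotheses through $d$ gives the chain $d(A)\subseteq[A,A]$ (trivially), $d([A,A])\subseteq U$ (since $[U,[A,A]]\subseteq U$), $d(U)\subseteq Z$ (since $[U,U]\subseteq Z$), and $d(Z)=0$, so $d^{4}=0$; moreover $d^{3}(A)\subseteq Z$ and $d^{2}(A)\subseteq U$. Semiprimeness will be used repeatedly, in particular in the form that a central element whose square, or whose cube, vanishes is itself $0$.

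The heart of the proof is a descent $d^{4}=0\Rightarrow d^{3}=0\Rightarrow d^{2}=0$, each step playing the Leibniz rule $d^{m}(xy)=\sum_{i}\binom{m}{i}d^{i}(x)d^{m-i}(y)$ against semiprimeness and using the absence of $2$-, $3$-, and $5$-torsion to clear the coefficients that appear. Concretely, expanding $d^{3}(ab)$, solving for $d^{2}(a)d(b)+d(a)d^{2}(b)$, and then applying $d$ twice — exploiting $d^{3}(A)\subseteq Z$ so that several terms die and the Leibniz sums collapse — produces a relation of the shape $5\bigl(d^{3}(a)d^{2}(b)+d^{3}(b)d^{2}(a)\bigr)=0$; as $5$ is not a zero-divisor, putting $b=a$ and using centrality of $d^{3}(a)$ gives $d^{3}(a)d^{2}(a)=0$, and then left-multiplying the polarized identity by $d^{2}(a)$, together with $d^{3}(A)\subseteq d^{2}(A)$, forces $d^{3}(a)^{3}=0$, hence $d^{3}(a)=0$. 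With $d^{3}=0$, the expansion of $d^{3}(ab)=0$ divided by $3$ gives $d^{2}(a)d(b)+d(a)d^{2}(b)=0$, and applying $d$ once more and dividing by $2$ gives $d^{2}(a)d^{2}(b)=0$ for all $a,b$; in particular $d^{2}(a)^{2}=0$.

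At this point the hypothesis $[U,U]\subseteq Z$ must re-enter, because a nilpotent inner derivation need not vanish on its own (for instance $x\mapsto[e_{1n},x]$ on $M_{n}(k)$ is nonzero with cube zero); the decisive extra fact is $d^{2}(A)\subseteq U$. Fix $a$, write $v=d^{2}(a)$, and let $w\in[A,A]$, $v'=d^{2}(b)$. Then $[v,w]\in U$ (as $v\in U$), so $[[v,w],v']\in[U,U]\subseteq Z$; expanding this and using $vv'=v'v=0$ (from the previous step) gives $vwv'+v'wv\in Z$, and with $v'=v$ this is $vwv\in Z$. Since $v^{2}=0$, this central element squares to $0$, so $vwv=0$; thus $v\,[A,A]\,v=0$. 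A short computation upgrades this to $v=0$: expanding $v[va,b]v=0$ and using $v^{2}=0$ reduces it to $v[v,b]\,A\,v=0$, and a routine two-sided-ideal argument in the semiprime ring $A$ then forces $vAv=0$, hence $v=0$. So $d^{2}=0$.

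Finally $d^{2}=0$ is the classical situation: $d^{2}(ab)=0$ yields $2\,[u,a][u,b]=0$, hence $[u,a][u,b]=0$, hence after the standard substitution $b\mapsto bc$ one gets $[u,a]\,A\,[u,a]=0$, whence $[u,a]=0$ by semiprimeness; thus $u\in Z$ and therefore $U\subseteq Z$. I expect the main obstacle to be the bookkeeping in the descent above — tracking which Leibniz terms survive after repeated applications of $d$ and verifying that the only integers one is forced to invert are $2$, $3$, and $5$ — together with inserting $[U,U]\subseteq Z$ at precisely the right moment in the third step, since without it (nilpotency of $d$ alone) the argument genuinely stalls.
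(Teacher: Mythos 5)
Your proof is correct, and its overall strategy coincides with the paper's: both show that the inner derivation $d(x)=[u,x]$ satisfies $d^4=0$ with $d^3(A)\subseteq Z$ and $d^2(A)\subseteq U$, descend step by step to $d=0$ by playing Leibniz-type expansions and the torsion hypotheses against semiprimeness, and close with $[u,a]A[u,a]=0$. Where you genuinely diverge is in the middle of the descent. The paper's engine is the identity $[a^2,c,\dots,c]=\binom{2n}{n}[a,c,\dots,c]^2+\cdots$, which converts the vanishing of the $(n+1)$-fold bracket $[A,c,\dots,c]$ into $[a,c,\dots,c]^2=0$ for the $n$-fold bracket (this is where $\binom{6}{3}=20$, $\binom{4}{2}=6$, $\binom{2}{1}=2$ account for the excluded primes $2,3,5$); and, crucially, it proves and then leans on a separate structural lemma --- that $U$ contains no nonzero nilpotent elements, established via $x^{n-1}[A,A]x^{n-1}=0$ and $(x^{n-1}A)^3=0$ --- in order to pass from $[a,x,x]^2=0$ and $[a,x]^2=0$ to $[A,x,x]=0$ and $[A,A,x]=0$, since those square-zero elements lie in $U$. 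You bypass that lemma entirely: having reached $d^2(a)d^2(b)=0$, you re-inject the hypothesis $[U,U]\subseteq Z$ through the element $[[v,w],v']$ with $v,v'\in d^2(A)\subseteq U$, obtaining $v[A,A]v=0$ and then $vAv=0$ directly. Both are legitimate deployments of the hypothesis; yours is arguably leaner (no case analysis on nilpotent elements of $U$), while the paper's intermediate fact about $U$ is a reusable statement in its own right. Two small points that do not affect validity: the coefficient in your first polarized relation is $10=\binom{5}{2}$ rather than $5$ (you still only need $2$ and $5$ invertible), and you can reach $d^3(a)^2=0$ in one step by substituting $b=d(a)$ into that relation, rather than routing through $d^3(a)^3=0$.
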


\begin{proof}
Let $a,b,c \in A$.  Then, by a straightforward induction,
\begin{equation} [ab,\underbrace{c,c...,c}_n] = \sum_{i \leq n} \binom{n}{i}[a,\underbrace{c,...,c}_i] [b,\underbrace{c,...,c}_{n-i}].
\end{equation}

First, suppose that \begin{equation}[A,\underbrace{c,...,c}_{n+1}]=(0) \end{equation} and $A$ does not have $\binom{2n}{n}-$torsion.  Then for all $a \in A$, 
\begin{equation}
[a^2, \underbrace{c,...,c}_{2n}]=\binom{2n}{n}[a,\underbrace{c,...,c}_{n}]^2 + \sum_{\underset{i+j= 2n} {max\{i,j\} \geq n+1}} [a,\underbrace{c,...,c}_{i}][x,\underbrace{c,...,c}_{j}].
\end{equation}
Thus for any $a \in A$, \begin{equation} [x,\underbrace{c,...,c}_{n}]^2=0.\end{equation}

Consider, now, $x\in U$.  
\begin{equation}
[A,x,x,x,x] \subseteq [A,A,x,x,x] \subseteq [U,U,U] \subseteq [Z,U] =(0).
\end{equation}
Because
\begin{equation}
[a,x,x,x] \in [A,A,x,x] \subseteq [U,U] \subseteq Z
\end{equation}
and, by the comment above, \begin{equation} [a,\underbrace{x,...,x}_{n}]^2 =0 \end{equation} for all $a \in A$, we see that  $([a,x,x,x]A)^2=(0)$. As $A$ is a semiprime algebra, this implies that the ideal itself is zero.  Thus for all $a \in A$,  we have that $([a,x,x,x]A)=(0)$ which, in turn, implies that $[a,x,x,x]=(0)$ and hence we have $[A,x,x,x] =(0)$.  Iterating the strategy with $[A,x,x,x] =(0)$, we have $[a,x,x]=0$ for all $a\in A$ and $[A,A,x,x]=(0).$

Now let $x$ be a nilpotent element of $U$, with index of nilpotence $n$. That is, $x^n=0, x^{n-1} \neq 0$.  Then for $a \in [A,A]$, 
\begin{equation}
xax=\frac{1}{2}([x,a,x]-x^2a-ax^2)=-\frac{1}{2}(x^2a+ax^2)=0.
\end{equation}
Thus $x^{n-1}[A,A]x^{n-1}=0$.  Then given $a\in A$, $[ax^{n-1},a] \in [A,A]$ so that 
\begin{eqnarray}
x^{n-1}[ax^{n-1},a]x^{n-1} 
&=&x^{n-1}(ax^{n-1}a-a^2x^{n-1})x^{n-1}\\
&=&x^{n-1}ax^{n-1}ax^{n-1}=0
\end{eqnarray}
which implies that $(x^{n-1}A)^3=(0)$.  Because $A$ is semiprime, $x^{n-1}A=(0)$ and thus $x^{n-1}=0$.  This contradicts the assumption that $x$ has index of nilpotence $n$, so $U$ must not contain any nonzero nilpotent elements.

We know that, for all $a \in A$,  $[a,x,x]^2 =0$. This, together with $U$ not containing nonzero nilpotent elements, implies that $[A,x,x] =(0)$. This, coupled with the beginning of the proof, shows that $[a,x]^2=0$ for all $a\in A$.  Now, if $a \in [A,A]$ then $[a,x] \in U$ and $U$ has no nonzero nilpotent elements; hence $[a,x]=0$.  Thus $[A,A,x]=0$ and $[A,A,U]=0$.  

Since $x$ commutes with every element of $[A,A]$, we have $x[x,ab]=[x,ab]x$ for any $a,b \in A$.  Expanding this using the identity $[x,ab]=[x,a]b+a[x,b]$ and noting that $x$ commutes with $[x,a]b+a[x,b]$, $[x,a]$, and $[x,b]$ gives that $2[x,a][x,b]=0$ and thus that $[x,a][x,b]=0$ for all $a,b \in A$.  Now setting $b=ab$ we have $[x,ab][x,a]=0$.  Then,
\begin{eqnarray}
[x,ab][x,a]&=&[x,ab][x,a] -a[x,b][x,a] \\
&=&([x,ab]+a[b,x])[x,a]\\
&=&[x,a]b[x,a]. \label{eq:uxyux}
\end{eqnarray}
Because \ref{eq:uxyux} holds for all $b \in A$, $[x,a]A[x,a] =(0)$.  $A$ is semi-prime, so $[x,a]=0$ from which we conclude that $x \in Z$, and, in turn, $U \subset Z$.

\end{proof}

With this in hand we are ready to prove the principal theorem. 
\begin{thm} 
If $A$ is a just infinite dimensional algebra without $2$, $3$, or $5-$torsion, then $[A,A]/(Z [A,A])$ is also just infinite.
\end{thm}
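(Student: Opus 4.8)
The plan is to verify directly that $L:=[A,A]/(Z\cap[A,A])$ meets the two requirements of a just infinite Lie algebra: that $\dim_k L=\infty$, and that $\dim_k L/V<\infty$ for every nonzero Lie ideal $V$ of $L$. Recall that $A$ is prime by Theorem~1 of \cite{fp}; as in Herstein's original statement I take $A$ noncommutative (if $A$ is commutative then $[A,A]=0$ and there is nothing to prove), so that $[A,A]\not\subseteq Z$, a prime ring with $[A,A]\subseteq Z$ being commutative. For the infinite-dimensionality of $L$ I would argue by contradiction: if $\dim_k L<\infty$, then the set $\{a\in A:[a,[A,A]]\subseteq Z\}$ — the kernel of the Lie homomorphism $A^-\to\operatorname{End}_k(L)$ induced by $\operatorname{ad}$ — is a Lie ideal of $A$ of finite codimension, and combining this with Herstein's dichotomy for Lie ideals of a prime ring (a non-central Lie ideal contains $[I,I]$ for some nonzero ideal $I$), with Lemma~\ref{lem:UinZ}, and with the fact that a noncommutative prime ring admits no nonzero commutative ideal, one is pushed back to commutativity. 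I regard this as the easier half and would keep it brief.

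The substance is the second requirement. Let $V\ne(0)$ be a Lie ideal of $L$ and let $U\subseteq[A,A]$ be its preimage under $[A,A]\twoheadrightarrow L$; then $U$ is a Lie ideal of $[A,A]$ with $Z\cap[A,A]\subsetneq U$, so $U\not\subseteq Z$. Write $S:=S(U)$. Working with the chain $U=U^{[1]}\supseteq U^{[2]}\supseteq\cdots$, I would first note (by a one-line Jacobi induction: if $W$ is a Lie ideal of $[A,A]$ then $[[W,W],c]=[[W,c],W]+[W,[W,c]]\subseteq[W,W]$ for $c\in[A,A]$, so $[W,W]$ is again a Lie ideal of $[A,A]$) that every $U^{[n]}$ is a Lie ideal of $[A,A]$. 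The crux is the claim that $S^{[4]}\ne(0)$. By Lemma~\ref{lem:lem1}, $U^{[2]}=[U,U]\subseteq S$, so passing to successive derived subalgebras gives $U^{[n+1]}\subseteq S^{[n]}$ for all $n$, in particular $U^{[5]}\subseteq S^{[4]}$; hence it suffices that $U^{[5]}\ne(0)$. If instead $U^{[5]}=(0)$, then $U^{[4]}$ is an abelian subgroup of $A$, a Lie ideal of $[A,A]$, with $[U^{[4]},U^{[4]}]=U^{[5]}=(0)\subseteq Z$, so Lemma~\ref{lem:UinZ} yields $U^{[4]}\subseteq Z$. Then $[U^{[3]},U^{[3]}]=U^{[4]}\subseteq Z$ forces $U^{[3]}\subseteq Z$ by the same lemma, then $U^{[2]}\subseteq Z$, then $U=U^{[1]}\subseteq Z$ — contradicting $U\not\subseteq Z$. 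So $S^{[4]}\ne(0)$.

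Granting the claim, $id_A(S^{[4]})$ is a nonzero two-sided ideal of $A$, so $\dim_k A/id_A(S^{[4]})<\infty$ by just infiniteness, and since $id_A(S^{[4]})\subseteq S$ by Lemma~\ref{lem:idS4} we obtain $\dim_k A/S<\infty$. To finish, fix a finite-dimensional $k$-subspace $W$ with $A=S\oplus W$: for $a=s+w$ and $b=s'+w'$ with $s,s'\in S$, $w,w'\in W$, the definition of $S$ forces $[s,s'],[s,w'],[s',w]\in U$, whence $[a,b]\equiv[w,w']\pmod{U}$. Since $[A,A]$ is the $k$-span of the commutators $[a,b]$ (note $[[a,b],[c,d]]$ is itself such a commutator), $[A,A]/U$ is spanned by the finitely many classes $[e_i,e_j]+U$ for $\{e_i\}$ a basis of $W$; hence $\dim_k L/V=\dim_k[A,A]/U<\infty$, as required.

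The hard part is the claim $S^{[4]}\ne(0)$: the two ideas it needs are that $S^{[4]}$ is exactly the object Lemma~\ref{lem:idS4} returns inside $S$, and that its non-vanishing is precisely what one extracts by running Lemma~\ref{lem:UinZ} repeatedly down the derived series of the non-central Lie ideal $U$. Establishing $\dim_k L=\infty$ and carrying out the final descent from $\dim_k A/S$ to $\dim_k[A,A]/U$ are, by comparison, mechanical.
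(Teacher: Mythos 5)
Your proposal is correct and follows essentially the same route as the paper's: the same dichotomy on whether $S^{[4]}$ vanishes, Lemma \ref{lem:idS4} to force $S$ to have finite codimension, the repeated descent through Lemma \ref{lem:UinZ} along a derived series, and the decomposition $A=S\oplus W$ to bound $[A,A]/U$ — you have merely arranged the second case contrapositively (showing $U\not\subseteq Z$ forces $S^{[4]}\supseteq U^{[5]}\neq(0)$) where the paper instead concludes $U\subseteq Z$ when $S^{[4]}=(0)$. The only genuine addition is your sketched argument that $\dim_k L=\infty$, a point the paper's proof leaves unaddressed.
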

\begin{proof}
Let $U$ be a nonzero Lie ideal of $[A,A]$. Suppose $S^{[4]} \neq (0)$.  Then by \ref{lem:idS4}, $S$ is of finite codimension in $A$, implying that $A= V+S$ and $dimV < \infty$.  $[A,A] =[S+V,S+V] \subseteq [S,A]+[V,V] \subseteq U+[V,V]$.  As $dimV < \infty$, $U$ is of finite codimension in $A$.

Now suppose $S^{[4]}=(0)$.  $A$ is just infinite hence prime and applying Lemma \ref{lem:UinZ} to $S^{[3]}$ shows $S^{[3]} \subseteq Z$.  Again, using Lemma \ref{lem:UinZ} on $S^{[2]}$, we get $S^{[2]} \subseteq Z$.  Repeating once more with $S$ gives $S\subseteq Z$.  However, by Lemma \ref{lem:lem1}, we know that $[U,U] \subseteq S \subseteq Z$ and so by Lemma \ref{lem:UinZ} we have $U \subseteq Z$. 
\end{proof}

From here, the next task ought to be a careful consideration of the excluded cases of Theorem \ref{thm:main}: algebras of characteristic $2$,$3$, and $5$. Herstein's results were strengthened by Baxter in  \cite{baxter} where he shows that the only exceptions are the algebras of $2\times2$ matrices over fields of characteristic 2.  We hope to  describe the exceptions to the conclusion of Theorem \ref{thm:main} specifically as well. We conjecture that the result holds in characteristic $3$ and $5$, but a quick inspection of the above proof shows that these characteristics require a different approach. Additionally, results of Jacobson and Rickart \cite{jr}, Baxter \cite{baxter}, and Amitsur \cite{amitsur}, stemming from Theorem \ref{thm:Herstein} (all nicely summarized in \cite{Her4}) should be considered in the context of just infinite algebras as they may have corresponding extensions.

\end{document}